\newtheorem{theorem}{Theorem}[section]
\newtheorem{corollary}[theorem]{Corollary}
\newtheorem{lemma}[theorem]{Lemma}
\newtheorem{definition}[theorem]{Definition}
\newtheorem{remark}[theorem]{Remark}
\newcommand{\Q}[0]{\mathbb{Q}}
\newcommand{\Z}[0]{\mathbb{Z}}
\renewcommand{\P}[0]{\mathbb{P}}
\newcommand{\mod}[0]{\mbox{ mod }}
\newcommand{\Rmnum}[1]{\expandafter\@slowromancap\romannumeral #1@}
\newcommand{\ord}[0]{\mbox{ord}}
\renewcommand{\v}[0]{\tilde{v}}
\newcommand{\w}[0]{\tilde{w}}
\renewcommand{\u}[0]{\tilde{u}}
\newcommand{\fr}[1]{\{#1\}}
\begin{document}
\title{Picard numbers of complex Delsarte surfaces with only isolated ADE-singularities.}
\author{Bas Heijne\footnote{Institut for Algebraic Geometry, Leibniz Universit\"at Hannover, Welfengarten 1, 30167 Hannover, Germany}}

\maketitle

\begin{abstract}
We give a classification of all Delsarte surfaces with only ADE-singularities. 
This results in several families.
For each of these families we give a closed fomula for the Picard number depending only on the degree.

\end{abstract}
\thanks{Funding by ERC StG 279723 (SURFARI) is gratefully acknowledged.
The author would like to thank Matthias Sch{\"u}tt, Tetsuji Shioda and Jaap Top for several helpfull suggestions and corrections.
}

\section{Introduction}

In \cite{ShiodaPic} Delsarte surfaces were first introduced and an algorithm to compute the Picard number of these surfaces was given.
This algorithm works for arbitrary characteristic.
In this text we will only consider the case where the surfaces are defined over the complex numbers.
Computation of the Picard number is in general a hard problem.
In \cite{Schuett}, the algorithm of \cite{ShiodaPic}  is one of the methods used to find Picard numbers of quintic surfaces with only ADE singularities.
In the current paper we extend these results to Delsarte surfaces of any degree with only isolated ADE singularities.
For non-singular Delsarte surfaces this has already been done in \cite{Jun}.

The main result of this paper is:
\begin{theorem}
For any degree $n\geq 6$, there are
 up to isomorphism at most $83$ Delsarte surfaces of degree $n\geq 6$ with only isolated ADE singularities.
 The possible cases, and for every case a closed formula for the Picard number, 
 is given in appendix \ref{tabel}.
\end{theorem}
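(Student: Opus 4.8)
The plan is to reduce the statement to a finite classification of exponent matrices and then to run Shioda's character-counting algorithm on each resulting family, correcting for the resolution of the singularities. Recall that a Delsarte surface of degree $n$ is the zero locus in $\P^3$ of a four-term equation $\sum_{i=0}^{3}\prod_{j=0}^{3}x_j^{a_{ij}}=0$, encoded by an exponent matrix $A=(a_{ij})\in M_4(\Z_{\geq 0})$ whose rows sum to $n$ and with $\det A\neq 0$. After rescaling the coordinates the coefficients may be normalized, and $A$ is determined only up to permuting its rows (relabelling the monomials) and its columns (relabelling the $x_j$). Following \cite{ShiodaPic}, let $m$ be the least common multiple of the denominators of the entries of $A^{-1}$; then the Fermat surface $F_m$ dominates the Delsarte surface, $H^2(F_m,\C)$ splits into one-dimensional character eigenspaces indexed by $a=(a_0,\dots,a_3)$ with each $a_i\not\equiv 0$ and $\sum_i a_i\equiv 0\pmod m$, the classes that descend to our surface are exactly those $a$ with $aA^{-1}\in\Z^4$, and such a class is algebraic precisely when $\sum_i\langle t\,a_i/m\rangle=2$ for every $t$ coprime to $m$.

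First I would carry out the classification. The singular locus of a monomial surface is supported on the coordinate tetrahedron $\{x_0x_1x_2x_3=0\}$, i.e.\ along its faces, edges and vertices together with the extra components the equation cuts out there. I would go through the tetrahedron stratum by stratum, determining for each which matrices $A$ keep the singular locus zero-dimensional and, at every isolated singular point, writing down the local equation so as to read off the singularity type. Requiring each local singularity to be of type $A_k$, $D_k$ or $E_k$ turns into explicit divisibility and bound conditions on the entries of $A$; together with the normalization up to row and column permutations these conditions collapse the infinitely many candidate matrices into finitely many templates, each a matrix whose entries are affine functions of $n$. The main obstacle of the whole argument is here: the case analysis must be genuinely \emph{exhaustive}, so that no admissible matrix is missed and the list really closes up at $83$ templates once $n\geq 6$. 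Small degrees must be excluded by hand, since there extra monomial coincidences produce non-ADE or non-isolated singularities, exactly as happens for $n=5$ in \cite{Schuett} and for the smooth case in \cite{Jun}.

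With the finite list in hand, I would compute, family by family, the Picard number of the minimal resolution $\tilde S$ as a sum of two contributions. Because ADE points are rational double points, resolving them leaves $p_g$ and $q$ unchanged and enlarges $H^2$ only by the classes of the exceptional $(-2)$-curves; hence
\[\rho(\tilde S)=\rho_{\mathrm{Fermat}}+\sum_{P}\mu_P,\]
where $\rho_{\mathrm{Fermat}}$ is the rank of the algebraic part of $H^2$ inherited from $F_m$ and $\mu_P$ is the Milnor number --- equivalently the rank of the ADE root lattice --- at each singular point $P$. The first term I would evaluate with Shioda's algorithm by counting the descending algebraic characters; since the condition $\sum_i\langle t\,a_i/m\rangle=2$ depends only on the arithmetic of $n$, this count is a closed expression in $n$, and the second term is read off directly from the singularity types found during the classification. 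The remaining subtlety is uniformity: both summands can jump with the divisibility of $n$ by small primes, so each formula is stated with the appropriate congruence case distinctions, and one must check that those coming from the character count are compatible with those coming from the resolution term. Adding and simplifying then yields the closed formulas, and the count of at most $83$ classes, recorded in appendix~\ref{tabel}.
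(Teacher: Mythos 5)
Your overall strategy (finite classification of exponent matrices, then Shioda's character count, then a correction for the resolution) is the same as the paper's, but the two steps that carry the actual content of the theorem are not supplied. First, the classification. You assert that the local ADE conditions ``collapse the infinitely many candidate matrices into finitely many templates,'' but the mechanism that makes this happen is exactly what has to be proved, and it is where the hypothesis $n\geq 6$ enters. The paper's argument is: at each coordinate vertex, e.g.\ $(1:0:0:0)$, a double point forces one monomial to be divisible by $X^{n-2}$, so $F=X^{n-2}M_x+Y^{n-2}M_y+Z^{n-2}M_z+U^{n-2}M_u$ with each $M_\bullet$ a quadratic monomial; for $n\geq 6$ the choices $M_x\in\{Y^2,Z^2,U^2\}$ are excluded (the residual singularity is not simple), leaving $7^4$ candidates, which a computer search modulo coordinate permutations cuts to $90$; finally, the lemma that the invertibility of $A$ forces \emph{all four} monomials to vanish at any singular point (hence every singular point has at least two zero coordinates) reduces the remaining verification to a short local analysis at the vertices and along the edges, which eliminates $7$ more. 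Without this chain --- in particular without the vertex divisibility condition and the all-monomials-vanish lemma --- your ``stratum by stratum'' plan gives neither finiteness nor the number $83$.

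Second, your Picard-number bookkeeping $\rho(\tilde S)=\rho_{\mathrm{Fermat}}+\sum_P\mu_P$, with the first term ``evaluated by counting the descending algebraic characters,'' is quantitatively wrong. The character eigenspaces indexed by $L$ are not in bijection with independent classes of $H^2$ of the resolution, and the algebraic classes supported on the coordinate hyperplanes (the hyperplane class and the components of $S\cap\{x_i=0\}$) are indexed neither by $L_1$ nor by exceptional curves. In the paper's worked example (case 26) one has $\#L_1=n(n-2)^2$ and a single $A_{n-2}$ point, so your recipe yields at most $n(n-2)^2-\#\Lambda+(n-2)=(n^3-4n^2+6n-2)-\#\Lambda-n$, undercounting $\rho$ by exactly $n$. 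The paper avoids this entirely: the Lefschetz number $\lambda=\#\Lambda$ is a birational invariant, so it may be computed on the singular model, and the minimal resolution of a degree-$n$ surface with only ADE singularities has $b_2=n^3-4n^2+6n-2$, the same as a smooth surface of that degree; hence $\rho=b_2-\#\Lambda$ with no Milnor numbers and no separate accounting of ``inherited'' classes. Your decomposition could be repaired by adding the boundary contributions, but as stated the evaluation of the first summand is a genuine error, not merely a different route.
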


Using the list in appendix \ref{tabel}, and considering the cases with $n=5$ as well, a short search gives the following.

\begin{corollary}
There are precisely three maximal (see \ref{definitions}) Delsarte surfaces with degree $n\geq 5$.
These are given by:
\[X^3YZ+Y^3ZU+XZ^3U+XYU^3=0,\]
\[X^5Y+XY^5+Z^5U+ZU^5=0 \mbox{ and }\] 
\[X^6+Y^6+Z^6+U^6=0.\] 

\end{corollary}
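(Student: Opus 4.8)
The plan is to prove this corollary as a direct consequence of the main theorem together with the explicit data tabulated in appendix \ref{tabel}. The corollary concerns \emph{maximal} Delsarte surfaces, so the first step is to recall the notion of maximality from \ref{definitions}: a surface is maximal when its Picard number attains the largest value permitted by the Hodge-theoretic bound, i.e. when the Picard number equals $h^{1,1}$ (equivalently, when the transcendental part of the second cohomology has the smallest possible rank, so that the surface is "singular" in the classical sense of having maximal Picard number). The strategy is therefore to compare, for each candidate family, the closed formula for the Picard number given in the appendix against the value of $h^{1,1}$ for a surface of that degree, and to isolate exactly those entries where equality holds.

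The key steps, in order, are as follows. First I would assemble the finite list of cases: by the main theorem there are at most $83$ families for each degree $n\geq 6$, and the corollary instructs us to adjoin the finitely many cases with $n=5$. This reduces the entire problem to a finite, if sizeable, bookkeeping task. Second, for each family I would write down both the closed Picard-number formula $\rho(n)$ from appendix \ref{tabel} and the corresponding $h^{1,1}$, the latter being a standard polynomial in $n$ determined by the degree and the ADE configuration (for a smooth surface of degree $n$ in $\P^3$ one has the usual expression for $h^{1,1}$, adjusted by the contributions of the resolved singularities). Third, I would set $\rho(n)=h^{1,1}$ and solve this equation within each family. Because both sides are explicit functions of $n$, maximality can hold either identically in $n$ or only for isolated values of $n$; the claim of the corollary is that, across all families and all $n\geq 5$, exactly three such solutions survive, and they are the three displayed surfaces.

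The main obstacle I anticipate is not any single hard inequality but rather the sheer combinatorial volume of the comparison: one must traverse all $83$ families plus the degree-$5$ cases and, for each, verify that $\rho(n)<h^{1,1}$ strictly except at the claimed solutions. The delicate families are those where $\rho(n)$ grows almost as fast as $h^{1,1}$, since there the difference $h^{1,1}-\rho(n)$ is small and one must check carefully that it never drops to zero except in the listed instances; here I would argue by showing that this difference is a polynomial in $n$ whose positivity for all large $n$ follows from its leading term, so that only finitely many small degrees need individual inspection. A secondary subtlety is the isomorphism identification: several tabulated defining equations may present the same surface, so after locating the equality cases I would confirm that the three surviving equations are genuinely pairwise non-isomorphic and that no further maximal surface has been double-counted or missed, thereby pinning the count at exactly three.
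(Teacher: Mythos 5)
Your proposal matches the paper's approach: the corollary is justified there only by the remark that ``a short search'' through the table of appendix \ref{tabel} (together with the extra $n=5$ case) locates the entries where $\rho$ attains the bound $h^{1,1}$ from Section \ref{definitions}, which is exactly the finite comparison $\rho(n)=h^{1,1}(n)$ you describe. Note only that, since ADE singularities are resolved without changing $h^{1,1}$, the bound is the single polynomial in $n$ given in Section \ref{definitions} and needs no case-by-case adjustment for the singularity configuration.
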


\section{Some general theory about Delsarte surfaces.} \label{definitions}
In this section we explain some of the theory on Delsarte surfaces that we will use.
We use the following definition of Delsarte surface.

\begin{definition}
 A Delsarte surface is a two dimensional subvariety in $\P^3$, which is defined as the zero set of a polynomial consisting of the sum of four monomials,
 such that the exponent matrix is invertible.
\end{definition}
Note that there always is a scaling of coordinates so that the constants in the polynomial are all one.
Hence we will assume we are in this situation.

In \cite{ShiodaPic} an algorithm is presented to compute the Picard number of these surfaces.
For later use we state an adapted version of this algorithm as given in \cite{HeijnePhD}.

Let $S$ be a projective Delsarte surface defined by the homogeneous polynomial
\[F=\sum_{i=1}^4 X^{a_{i1}}Y^{a_{i2}}Z^{a_{i3}}U^{a_{i4}}.\]
We construct the exponent matrix:
\[A=\left(
\begin{array}{cccc}
a_{11} &a_{12} &a_{13} &a_{14}\\
a_{21} &a_{22} &a_{23} &a_{24}\\
a_{31} &a_{32} &a_{33} &a_{34}\\
a_{41} &a_{42} &a_{43} &a_{44}\\
\end{array}
\right).\]

We define three row vectors $e_1=(1,0,0,-1)$, $e_2=(0,1,0,-1)$ and $e_3=(0,0,1,-1)$. 
From these we construct three vectors $\v=e_1A^{-1}$, $\w=e_2A^{-1}$ and $\u=e_3A^{-1}$.
Let $V$ be the $\Z$-module given by $V=\{(a_1,a_2,a_3,a_4)\in (\Q/\Z)^4 : a_1+a_2+a_3+a_4\equiv0(1).\}$
We will view $\v$, $\w$ and $\u$ as elements of $V$.
These vectors generate a finite $\Z$-module
\[L:=\{i\v+j\w+k\u\in (\Q/\Z)^4: i,j,k \in\Z\}.\]
We construct the set $L_0$ as the subset of $L$ where at least one of the coordinates is zero and $L_1$ as its complement.
\[L_0:=\{v=(v_1,v_2,v_3,v_4)\in L: \exists i : v_i= 0\}.\]
\[L_1:=\{v=(v_1,v_2,v_3,v_4)\in L: \forall i : v_i\neq 0\}.\]
We now define $\Lambda \subseteq L_1$ defined by the following property.
An element  $v=(v_1,v_2,v_3,v_4)\in L_1$ is an element of $\Lambda$ precisely when there exists a $t\in \Z$
such that for all $i$: $\ord_+(tv_i)=\ord_+(v_i)$ and  $\sum \fr{tv_i}\neq2$.
Here $\fr{\cdot}$ is the natural bijection between the set $\Q/\Z$ and the interval $[0,1)\cap \Q$ and $\ord_+(\cdot)$ stands for 
the order in the additive group $\Q/\Z$.

The Lefschetz number $\lambda$ is defined as the difference between the second Betti number of $S$ and its Picard number:
 $\rho=b_2-\lambda$.
The main result of \cite{ShiodaPic} is that the Lefschetz number can be computed by $\lambda=\#\Lambda$.
The number $b_2$ for a surface in $\P^3$ of degree $n$ with only ADE singularities is given by $b_2=n^3-4n^2+6n-2$.
The Hodge number $h^{1,1}=(2n^3+7)/3-2n^2$ is an upper bound for the Picard number: $\rho\leq h^{1,1}$
We call a surface maximal when we have equality in the last equation.
See \cite{lines} for more details.

\begin{remark}
It is important to note that if two surfaces are birationally equivalent, then they have the same Lefschetz number.
In particular we can compute the Lefschetz number on a singular (Delsarte) surface and use it to compute the Picard number of the desingularization.
\end{remark}

The set $L_1\setminus \Lambda$ consists of elements of a specific form.
This has been shown originally in \cite{ShiodaFer} for Fermat surfaces, but the results extend trivially to Delsarte surfaces.
There are three different types of elements $v\in L_1\setminus \Lambda$.
\begin{itemize}
 \item $v$ is such that $v_1+v_j\equiv 0(1)$ for some $j\in\{2,3,4\}$. In this case $v$ is called decomposable.
      The set of decomposable elements will be $D$.
\item $v$ is a permutation of a vector of the form $(a,1/2, 1/2+a, -2a)$ or $(a,1/2+a,1/2+2a,-4a)$ or $(a,1/3+a,2/3+a,-3a)$.
Here $a$ should be such that none of the coordinates is zero and the vector is not decomposable.
These $v$ are called regularly indecomposable. The set of indecomposable elements will be $R$. 
 \item $v$ is an exceptional element. It was proven in \cite{Aoki} and correctly formulated in \cite{AokiEr} that there are only a finite number of execeptional cases for a Fermat surface.
One can easily see that these are all the possible cases for a Delsarte surface as well.
The set of exceptional elements is explicitly known and has cardinality 22080.
  We will refer to these cases as irregular indecomposable and denote them by $I$.
\end{itemize}

\section{83 surfaces}

In this paper we consider Delsarte surfaces with only isolated ADE singularities.
There are several equivalent definitions of ADE singularities, see \cite{Durfee}.

\begin{definition}
 A singular point $P$ on a surface $S$ is ADE if it is locally isomorphic to one of the following types of singularities:
\begin{itemize}
 \item $X^{n+1}+Y^2+Z^2$ this is a singularity of type $A_n$, with $n\geq 1$.
 \item $X^{n-1}+XY^2+Z^2$ this is a singularity of type $D_n$, with $n\geq 4$.
 \item $X^3+Y^4+Z^2$ this is a singularity of type $E_6$.
 \item $X^3+XY^3+Z^2$ this is a singularity of type $E_7$.
 \item $X^3+Y^5+Z^2$ this is a singularity of type $E_8$.
\end{itemize}

\end{definition}

In \cite{reid} a method is given to effectivelly determine wether a singular point is ADE or not.

\begin{theorem}
There are up to isomorphism at most $83$ Delsarte surfaces of degree $n\geq 6$ with only ADE singularities.
\end{theorem}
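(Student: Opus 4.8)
The plan is to translate the geometric constraint ``only isolated ADE singularities'' into finitely many explicit numerical conditions on the exponent matrix $A$, and then to carry out an exhaustive combinatorial enumeration modulo the natural symmetries. First I would record the standing normalizations: since $F$ is homogeneous of degree $n$, every row of $A$ sums to $n$, so the four exponent vectors are lattice points of the simplex $\{a\in\Z_{\ge0}^4:\sum_j a_j=n\}$; because $A$ is invertible these four points are affinely independent, hence they are the vertices of a genuine $3$-dimensional simplex, namely the Newton polytope of $F$. The relevant identifications among Delsarte surfaces are generated by permuting the four variables (column permutations of $A$), permuting the four monomials (row permutations), and the invertible monomial coordinate transformations of $\P^3$; I would fix a normal form for $A$ under this group and count only normal forms.

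Next I would localize the singular set. On the open torus the equation $F=0$ becomes, after a monomial change of coordinates adapted to the simplex, of the form $1+\chi_1+\chi_2+\chi_3=0$ for independent characters $\chi_i$, and one checks this is smooth; hence every singular point of $S$ lies on the coordinate hyperplane arrangement $\{XYZU=0\}$. I would then stratify this arrangement into the open coordinate planes, the coordinate lines, and the four coordinate points, and analyze $S$ along each stratum. The crucial structural feature of a Delsarte surface is that its restriction to a coordinate subspace (setting some variables to zero) is again a sum of a subset of the monomials, so the local equation at each stratum is read off directly from the corresponding submatrix of $A$. Since ADE singularities are exactly the rational double points, $S$ must have multiplicity $2$ at each singular point and multiplicity $1$ elsewhere; this multiplicity-$2$ condition, together with the local normal forms in the ADE definition and the effective criterion of \cite{reid}, converts ``ADE at this point'' into explicit inequalities among the entries $a_{ij}$ of the relevant submatrix, while the requirement that the singular locus be $0$-dimensional forbids $S$ from being singular along an entire coordinate line and imposes further inequalities.

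I would then observe that these inequalities bound the \emph{differences} between the exponents appearing along each stratum, not the exponents themselves: a rational double point has bounded local complexity, so the gaps controlling the tangent cone and the first few jets are bounded independently of $n$. Consequently each admissible combinatorial type of $A$ forms an affine-linear family in the single parameter $n$, there are only finitely many such types, and within each type the surface exists and retains its singularity structure for all $n$ in an explicit congruence class with $n\ge6$. Running the resulting finite case analysis, discarding configurations with non-isolated or non-ADE singularities and identifying configurations equivalent under the symmetry group, I expect to be left with exactly the classes recorded in appendix \ref{tabel}.

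The main obstacle will be the local analysis and the bookkeeping it feeds: one must carry out the ADE classification completely and correctly at the coordinate lines and coordinate points, where several monomials can collide and the finer distinction between $A_n$, $D_n$ and the $E$-types requires genuine computation of tangent cones and higher jets, and one must prove the enumeration is exhaustive while neither over- nor under-counting isomorphic surfaces. Establishing that the exponent gaps, and hence the number of combinatorial types, are bounded uniformly in $n$ is what makes the count finite, and getting the symmetry reduction exactly right is what pins the number down to $83$ rather than merely ``finitely many''.
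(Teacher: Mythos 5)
Your strategy coincides with the paper's: both localize the singular locus to the coordinate strata using the Delsarte structure (the paper via the lemma that invertibility of the exponent matrix forces every monomial to vanish at a singular point, hence at least two coordinates vanish; you via smoothness on the torus), both use the double-point/ADE condition at the four coordinate points to force one monomial per variable with that variable's exponent at least $n-2$ — hence finitely many combinatorial types, each affine-linear in $n$ — and both finish by a finite, partly computer-assisted enumeration modulo coordinate permutations together with a local ADE check at the remaining candidate singular points. The only substantive content you defer, namely the case analysis that cuts the roughly $90$ symmetry classes down to $83$ by examining the points with two vanishing coordinates such as $(\eta:0:0:1)$, is exactly the part the paper also carries out by explicit case-by-case computation of local equations, so your outline is faithful to the published argument.
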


\begin{proof}
We will first bound the number of Delsarte surfaces of a fixed degree $n\geq 6$ with only ADE-singularities.

Let $S$ be a Delsarte surface of degree $n$. Consider the point $P_x=(1:0:0:0)$.
Unless one of the monomials equals $X^n$, the point $P_x$ lies on $S$.
A necessary (but not sufficient) condition for $P_x$ to be either smooth or of ADE type is that one of the monomials defining $S$ is divisible by $X^{n-2}$. 

A similar argument applies for the points $(0:1:0:0)$,  $(0:0:1:0)$, $(0:0:0:1)$.
From this we see that if $S$ has only ADE singularities it must defined by a polynomial of the form.
\begin{equation}\label{vergelijking}
 F=X^{n-2}M_x+Y^{n-2}M_y+Z^{n-2}M_z+U^{n-2}M_u.
\end{equation}
Here $M_x$, $M_y$, $M_z$ and $M_u$ are degree 2 homogeneous monomials.
This already implies that there are at most $10^4$ possibilities for the equation $F$.
We will now in various staps reduce this number to $83$.

Now since $n\geq 6$ we see that the point $(1:0:0:0)$ is not of ADE type if 
$M_x$ is one of $Y^2$, $Z^2$ or $U^2$.
A similar condition applies to $M_y$, $M_z$ and $M_u$.

For fixed $n$ there are $7^4=2401$ degree $n$ polynomials for which this does not occur.
Hence we have at most $2401$ degree $n$ Delsarte surfaces with only ADE-singularities.

Using a computer algebra package we remove all $F$'s that are divisible by a coordinate,
 as well as duplicate results after permutations of coordinates.
This leaves us with $90$ surfaces.

For these $90$ surfaces we still have to check whether all the singular points are of ADE-type.
The following results will be helpful:

\begin{lemma}
Let $S$ be a Delsarte surface given by:
\[F=M_1+M_2+M_3+M_4.\] 
Here $M_i$ are monomials in $X$, $Y$, $Z$ and $U$.
Let $P$ be a singular point on $S$.
Then
\[M_i(P)=0,  \mbox{ for all } i.\]
\end{lemma}

\begin{proof}
Let $A$ be the exponent matrix of $S$.
By considering the partial derivatives of $F$ we find for a singular point $P$ that:
\[A \cdot 
\left(
\begin{array}{c}
M_1\\
M_2\\
M_3\\
M_4
\end{array}
\right)(P)
=
\left(
\begin{array}{c}
0\\
0\\
0\\
0
\end{array}
\right).
\]

Since $A$ is nonsingular this means that we have $M_i=0$ for all $i$.
\end{proof}

\begin{corollary}
Let $S$ be the Delsarte surface given by (\ref{vergelijking}).
Let $P$ be a singular point, then it has at least two zero coordinates.
\end{corollary}

\begin{proof}
None of the monomials contains all four coordinates, hence at least two of the coordinates have to vanish in order for all the $M_i$ to be zero.
\end{proof}

We first consider the singular points with three coordinates zero.
Without loss of generality we can assume that the singular point is the point $(0:0:0:1)$.

If $M_u=U^2$, then the point $(0:0:0:1)$ does not lie on the surface.
If $M_u=XU,YU,ZU$ then this point is non-singular.
Consider the case that $M_u$ is one of $XY$, $XZ$ or $YZ$. 
Without loss of generality we can then assume that $M_u=XY$.
The local equation of the point will then be:
\[xy+z^{n-2}M_z+\mbox{ terms of higher order.}\]
A necessary and sufficient condition for the point $(0:0:0:1)$ to be of ADE type is that $M_z\neq XY$.
For 7 of the 90 surfaces we find int this way a singular point not of ADE-type.

The remaining possible singular points are permutations of the point $(\eta:0:0:1)$, with $\eta$ nonzero.
We will assume that the singular point is the point $P=(\eta:0:0:1)$ and consider what happens depending on $M_x$.

\begin{itemize}
 
\item Assume that $M_x\in\{X^2,XU\}$.
For $P$ to be a point on the surface this means that $M_u\in\{U^2,XU\}$.
In this case $\eta$ has to be a specific root of unity.
It turns out that the partial derivatives with respect to $X$ and $U$ are then nonzero.
So in this case, the point $P$ is non-singular.

\item Assume that $M_x\in \{XY,UY\}$.
Then by considering the partial derivative with respect to $Y$ we find that $M_u\in \{XY,UY\}$.
Consider the affine chart with $U=1$ and transform the singular point to the origin.
Then we find a local equation of the form:
\[(\eta^a+\eta^b)y+ (a\eta^{a-1}+b\eta^{b-1})xy + \mbox{ higher order terms},\]
with $a\neq b$. This point is non-singular, or (if $\eta^a+\eta^b=0$) of ADE-type.

\item The case $M_x\in \{XZ,UZ\}$ is symmetric to the previous case.

\item The final case is $M_x=YZ$. 
This does not occur in our list of $83$ surfaces.
\end{itemize}
\end{proof}

The explicit equations of the $83$ surfaces are given in the appendix.

\begin{remark}
The table in appenindix \ref{tabel} was computed using the condition $n\geq 6$.
Also for $n=5$ the entries in the table are Delsarte surfaces with at most ADE singularities.
In case $n=5$ there is (up to permutation) precisely one more Delsarte surface with only ADE singularities, namely the surface with $\rho=25$ given by the equation:
\[Y^2X^3+Z^2Y^3+X^2Z^3+U^5.\]
\end{remark}

\section{Computation of the Picard Number: An example.}
Here we illustrate how we computed the Picard numbers in the table.
We do this for one example only. The same ideas work for the other cases.
We compute the Picard number of the surface $S$ given by the equation
\begin{equation}
 \label{formule}
X^{n}+Y^{n}+Z^{n-1}U+XYU^{n-2}=0.
\end{equation}

This corresponds to case 26 in the table of Appendix \ref{tabel}.

We determine the exponent matrix:
\[A=
\left(
\begin{array}{cccc}
n  &  0  &  0 & 0\\
0  &  n  &  0 & 0\\
0  &  0  &  n-1& 1\\
1  &  1  &  0 & n-2\\
\end{array}
\right).
\]

From this we find the three vectors in $V$ that generate $L$.
\[\v=\left(\frac{n-1}{n(n-2)},\frac{1}{n(n-2)},0,\frac{-1}{n-2}\right),\]
\[\w=\left(\frac{1}{n(n-2)},\frac{n-1}{n(n-2)},0,\frac{-1}{n-2}\right),\]
\[\u=\left(\frac{1}{(n-1)(n-2)},\frac{1}{(n-1)(n-2)},\frac{1}{n-1},\frac{-n}{(n-1)(n-2)}\right).\]

We use the following formula to compute $\#\Lambda$:
\[\# \Lambda =\# L-\# (L_0\cup D)-\#I-\# R.\]

We first determine the sets $N_i=\{v=(v_1,v_2,v_3,v_4) \in L: v_i=0\}$, for $i=1,2,3,4$.
\[N_1=\{i(n(n-2))\v+j((1-n)\v+\w)+k((n-n^2)\v+(n-1)\u):i,j,k\in \Z \},\]
\[N_2=\{i(n(n-2))\v+j((1-n)\v+\w)+k(-n\v+(n-1)\u):i,j,k\in \Z \},\]
\[N_3=\{i\v+j\w+k(n-1)\u:i,j,k\in \Z \},\]
\[N_4=\{i(n-2)\v+j(-\v+\w)+k(-n\v+(n-1)\u):i,j,k\in \Z \}.\]

We compute the intersection of these sets:
\[\bigcap_{i=1}^4 N_i=\{i(n(n-2))\v+j((1-n)\v+\w)+k((n-n^2)\v+(n-1)\u):i,j,k\in \Z \}.\]
From this we see that $L$ can be described by
\[L=\{i\v+j\w+k\u : 0\leq i<n(n-2), j=0, 0\leq k <n-1 \}.\]
This implies $\# L=n(n-1)(n-2)$ since this desciption gives a bijection between the set $L$ and the set
\[\{(i,j,k)\in \Z^3 : 0\leq i<n(n-2), j=0, 0\leq k <n-1 \}.\]

We compute the number of elements of $L_0$ using
\[\# L_0 = \sum_{i=1}^4 (-1)^{i+1}\sum_{1\leq d_1 <\ldots<d_i\leq 4} \# L\cap \bigcap N_{d_i}\]
This gives 
$\# L_0=n(n-2)$. 

Let $N_5$, $N_6$ and $N_7$ consist of all elements of the form $(a,-a,b,-b)$, $(a,b,-a,-b)$, $(a,b,-b,-a)$.
These are the elements of $D$.
These sets are given by:
\[N_5=\begin{array}{ll} 
\{i(n-2)\v+j(\v+\w)+k(-\v+\frac{n-1}{2}\u):i,j,k\in \Z \} & \mbox{if } n \equiv 1 \mod 2\\
\{i(n-2)\v+j(\v+\w)+k(-2\v+{n-1}\u):i,j,k\in \Z \}& \mbox{if } n \equiv 0 \mod 2
\end{array}
\]
\[N_6=\{i(n(n-2))\v+j((1-n)\v+\w)+k(-n(n-1)\v+\u):i,j,k\in \Z \}. \]
\[N_7=\{i(n(n-2))\v+j((1-n)\v+\w)+k(-n\v+\u):i,j,k\in \Z \}. \]
Note that for $N_5$ this result will depend on $n \mod 2$.
This behaviour is quite standard and happens for more cases.

We use this to compute 
\[\# (L_0\cup D)=
 \sum_{i=1}^7 (-1)^{i+1}\sum_{1\leq d_1 <\ldots<d_i\leq 7} \# L\cap \bigcap N_{d_i}=\begin{cases}
									    n^2-3 \mbox{ if }	n\equiv 1(2)\\
	                                                                    n^2-n-2 \mbox{ if }	n\equiv 0(2)\\
                                                                          \end{cases}
\]

We now compute the number of regular decomposable elements.
To do this we consider vectors of the form $r=(a,1/2,1/2+a,-2a)$.
We determine for which $a$ we have $r\in L$.
If $r$ belongs to $L$ then there exists a vector $(k,l,m)\in \Z^3$ such that
\[(k,l,m) \left(\begin{array}{cccc}
1  &  0  &  0 & -1\\
0  &  1  &  0 & -1\\
0  &  0  &  1 & -1\\
\end{array}
\right) A^{-1}=r.\]
Note that $r$ and $A$ are constructed in such a way that a rational solution for $(k,l,m)$ always exists. 
We only need to find out whether this solution is integral.

We calculate that
\[(k,l,m)=(a,1/2,1/2+a)\left(\begin{array}{cccc}
n-1  &  -1  &  0 \\
-1  &  n-1  &  0 \\
-1  &  -1  &  n-1 \\
\end{array}
\right).\]

The vector $(k,l,m)$ is only integral when $n\equiv 0(2)$ and $a=1/2$.
This corresponds to the solution $r=(1/2,1/2,0,0)$.
Since the third and fourth coordinates of $r$ are zero we find that $r\in L_0$ and hence $r\not\in R$.

We now consider the cases where $r$ is a permutation of $(a,1/2,1/2+a,-2a)$.
It turns out that we only find the result $r=(1/2,1/2,0,0)$, three more times, and no other integer solutions.
So none of the permutations of $(a,1/2,1/2+a,-2a)$ makes a contribution to the set $R$.

We move on to elements of $R$ that are permutations of $(a,1/2+a,1/2+2a,-4a)$.
We will give three examples in detail.

Consider the case $r=(a,1/2+a,1/2+2a,-4a)$.
With the same argument as before, we need to find an solution for
\[(k,l,m)=(a,1/2+a,1/2+2a)\left(\begin{array}{cccc}
n-1  &  -1  &  0 \\
-1  &  n-1  &  0 \\
-1  &  -1  &  n-1 \\
\end{array}
\right),\]
with $k$, $l$ and $m$ integers.
We find a solutions with $a=\frac{1}{4},\frac{3}{4}$ and $n\equiv0(4)$, furthermore we find a solution with
$a=\frac{1}{12},\frac{5}{12},\frac{7}{12},\frac{11}{12}$ and $n\equiv4(12)$.

The solutions $a=\frac{1}{4},\frac{3}{4}$ give respectively $r=(1/4,3/4,0,0)$ and $r=(3/4,1/4,0,0)$.
Since both solutions have a zero coordinate they are elements of $L_0$, and hence not of $R$.
The solutions $a=\frac{1}{12},\frac{5}{12},\frac{7}{12},\frac{11}{12}$ give respectively $r=(1/12,7/12,2/3,2/3)$,
$r=(5/12,11/12,1/3,1/3)$, $r=(7/12,1/12,2/3,2/3)$ and $r=(11/12,5/12,1/3,1/3)$.
These vectors are all elements of $R$.

Now we consider $r=(a,1/2+2a,-4a,1/2+a)$.
We find 
\[(k,l,m)=(a,1/2+a,1/2+2a)\left(\begin{array}{cccc}
n  &  0  &  0 \\
0  &  n-1  &  1 \\
1  &  0  &  n-2 \\
\end{array}
\right)\]
This equation has no integer solutions at all.

Let us now consider the vector $r=(a,1/2+2a,1/2+a,-4a)$
We need integral solutions for
\[(k,l,m)=(a,1/2+a,1/2+2a)\left(\begin{array}{cccc}
n-1  &  0    &  -1 \\
-1   &  n-1  &  -1 \\
-1   &  0    &  n-1 \\
\end{array}
\right)\]
We find that the only solution is given by $a=\frac{1}{2}$ with $n\equiv0(2)$.
This solutions corresponds to the vector $(1/2,0,1/2,0)$, and this is an element of $L_0$ and not of $R$.

Considering further permutations of $(a,1/2+a,1/2+2a,-4a)$, yield no new elements of $R$.

We will now look at the solutions of the form $r=(a,1/3+a,2/3+a,-3a)$.
As before $r$ is an element of $L$ precisely when
\[(k,l,m)=(a,1/3+a,2/3+a)\left(\begin{array}{cccc}
n-1  &  -1  &  0 \\
-1  &  n-1  &  0 \\
-1  &  -1  &  n-1 \\
\end{array}
\right)\]
has an integral solution for $k$, $l$ and $m$.
The only solutions occurs when $a=1/3$ and $n\equiv0(3)$ or when $a=5/6$ and $n\equiv3(6)$.
In the case $a=1/3$ we find that $r=(1/3,2/3,0,0)$, hence we find $r\in L_0$.
In the case $a=5/6$ we find $r=(5/6,1/6,1/2,1/2)$. This vector is decomposable, so we have $r\in D$.
Both cases yield no elements of $R$.

Finally considering permutations of $r=(a,1/3+a,2/3+a,-3a)$ yields no elements of $R$.

In total there are only four potential elements of $R$ namely:
\[\left(\frac{1}{12},\frac{7}{12},\frac{2}{3},\frac{2}{3}\right), \left(\frac{5}{12},\frac{11}{12},\frac{1}{3},\frac{1}{3}\right),\]
\[\left(\frac{7}{12},\frac{1}{12},\frac{2}{3},\frac{2}{3}\right), \left(\frac{11}{12},\frac{5}{12},\frac{1}{3},\frac{1}{3}\right).\]
These elements exist in $\Lambda$ precisely when $n\equiv 4(12)$.

Finally we will consider the finite set of exceptional solutions.
We have 22080 potential elements of the set $I$.
For each of these 22080 elements $r$ we can easily check whether it belongs to $I$.
We simply compute $rA$ and if the result consists of only integers then it belongs to $I$, otherwise not.
For example take $r=(\frac{1}{24},\frac{19}{24},\frac{1}{3},\frac{5}{6})$, one of the 22080 elements.
We find:
\[rA=\left(\frac{5}{6}+\frac{n}{24},\frac{5}{6}+\frac{19n}{24},\frac{n-1}{3},\frac{5n}{6}+\frac{2}{3}\right).\]
These coefficients are all integers iff $n\equiv 4(24)$.

Since we have to check $22080$ elements we resort to a computer search here.
The number of exeptional solutions is given by the formula:
\[\#I= 8\delta_{4,24}
+8\delta_{\{5,6,10\},30}
+16\delta_{6,40}
+12\delta_{9,42}
+16\delta_{10,48}
+16\delta_{\{5,6,10\},60}
+24\delta_{\{8,9\},84}
+32\delta_{22,120}
\]
Here \[\delta_{i,j}=
\begin{cases}
0 \mbox{ if } n\not\equiv  i(j)\\
1 \mbox{ if } n\equiv  i(j),
\end{cases},
\delta_{S,j}=\sum_{i\in S} \delta_{i,j}.\]

We now have enough information to compute the Lefschetz number using:
\[\# \Lambda =\# L-\# (L_0\cup D)-\#I-\# R\]
This gives:

\begin{eqnarray*}
\lambda &=&n^3-4n^2+2n+3+(n-1)\delta_{2,2}-4\delta_{4,12}-8\delta_{4,24}
-8\delta_{\{5,6,10\},30}
-16\delta_{6,40}
\\&&
-12\delta_{9,42}
-16\delta_{10,48}
-16\delta_{\{5,6,10\},60}
-24\delta_{\{8,9\},84}
-32\delta_{22,120}.
\end{eqnarray*}

For a surface of degree $n$ with only ADE-singularities we find after resolutions of singularities that $b_2=n^3-4n^2+6n-2$.
So the Picard number of the surface is given by (\ref{formule}):
\begin{eqnarray*}
\rho&=&
4n-5
-(n-1)\delta_{2,2}
+4\delta_{4,12}
+8\delta_{4,24}
+8\delta_{\{5,6,10\},30}
+16\delta_{6,40}
\\&&
+12\delta_{9,42}
+16\delta_{10,48}
+16\delta_{\{5,6,10\},60}
+24\delta_{\{8,9\},84}
+32\delta_{22,120}.
\end{eqnarray*}

\appendix
\section{Appendix}\label{tabel}
In this table we will give the formula's of all Delsarte surfaces with only ADE singularities.
The number on the left is simply an index. This is followed by the equation of the Delsarte surface and in the third column the singular points.
Below the second and third column the equation for the Picard number is given.

The case of the Fermat surface was already computed in \cite{Aoki}. 
Severak examples of smooth Delsarte surfaces where given by \cite{ShiodaPic}, and 
a systematic treatment of all smooth Delsarte surfaces was given in \cite{Jun}.
These cases have been marked in the table.

\[\begin{array}{|l|l|l|}
\hline
1&X^{n-2}YZ+Y^{n-2}ZU+Z^{n}+XU^{n-1}
& 
(0:1:0:0)\\
&&(1:0:0:0) \\
\cline{2-3}
&\multicolumn{2}{|l|}{ n^2-2n+2+2\delta_{3,3} }\\
\hline
2
& X^{n-2}YZ+Y^{n-2}ZU+Z^{n}+YU^{n-1} 

&
 (0:1:0:0) \\
&& (1:0:0:0) \\
\cline{2-3}
&\multicolumn{2}{|l|}{ 2n^2-3n-2 }\\
\hline
3&
  X^{n-2}YZ+XY^{n-2}Z+Z^{n}+XU^{n-1} 
 &
 (0:1:0:0) \\
&& (1:0:0:0) \\
\cline{2-3}
&\multicolumn{2}{|l|}{ 2n^2-5n+4+6\delta_{7,9}+6\delta_{13,18} }\\
\hline
4& X^{n-2}YZ+XY^{n-2}U+Z^{n}+XU^{n-1} &

 (0:1:0:0) \\
&& (1:0:0:0) \\
\cline{2-3}
&\multicolumn{2}{|l|}{ n^2-2+4\delta_{4,10}    }\\
\hline
5&
 X^{n-2}YZ+XY^{n-2}U+Z^{n}+YU^{n-1} & 
 
 (0:1:0:0) \\
&& (1:0:0:0) \\
\cline{2-3}
&\multicolumn{2}{|l|}{  n^2   }\\
\hline
6& X^{n-2}YZ+XY^{n-2}U+Z^{n}+ZU^{n-1} &

 (0:1:0:0) \\
&& (1:0:0:0) \\\cline{2-3}
&\multicolumn{2}{|l|}{  2n^2-5n+4    }\\
\hline

7& X^{n-2}YU+Y^{n-2}ZU+Z^{n}+XU^{n-1} 
& (0:1:0:0) \\
&& (1:0:0:0) \\\cline{2-3}
&\multicolumn{2}{|l|}{ n^2-n+\delta_{1,2}    }\\
\hline
8& X^{n-2}YU+Y^{n-2}ZU+Z^{n}+YU^{n-1} 
& (0:1:0:0) \\
&& (1:0:0:0) \\
&& (\sqrt[{n-2\;\;\;}]{-1}:0:0:1) \\
\cline{2-3}
&\multicolumn{2}{|l|}{  2n^2-3n-2   }\\
\hline
9& X^{n-2}YU+Y^{n-2}ZU+Z^{n}+ZU^{n-1} 
& (0:1:0:0) \\
&& (1:0:0:0) \\
&& (0:\sqrt[n-2\;\;\;]{-1}:0:1) \\\cline{2-3}
&\multicolumn{2}{|l|}{  2n^2-5n+4+(n-2)\delta_{2,2}+6\delta_{17,18}   }\\
\hline
10& X^{n-2}YU+XY^{n-2}U+Z^{n}+XU^{n-1} 
& (0:1:0:0) \\
&& (1:0:0:0) \\
&& (\sqrt[n-3\;\;\;]{-1}:1:0:0) \\
&& (0:\sqrt[n-2\;\;\;]{-1}:0:1) \\\cline{2-3}
&\multicolumn{2}{|l|}{  2n^2-5n+4+(n-2)\delta_{2,2}+4\delta_{5,5}+6\delta_{7,14}+6\delta_{6,18}+8\delta_{\{15,20\},30}   }\\
\hline
11& X^{n-2}YU+XY^{n-2}U+Z^{n}+ZU^{n-1} 
& (0:1:0:0) \\
&& (1:0:0:0) \\
&& (\sqrt[n-3\;\;\;]{-1}:1:0:0) \\\cline{2-3}
&\multicolumn{2}{|l|}{  2n^2-5n+4   }\\
\hline
12& X^{n-1}Y+Y^{n-1}Z+Z^{n}+XYU^{n-2} 
& (0:0:0:1) \\
&& (1:0:0:\sqrt[n-2\;\;\;]{-1}) \\\cline{2-3}
&\multicolumn{2}{|l|}{  2n^2-5n+4   }\\
\hline
13& X^{n-1}Y+Y^{n-1}Z+Z^{n}+XZU^{n-2} 
& (0:0:0:1) \\\cline{2-3}
&\multicolumn{2}{|l|}{  2n^2-5n+4   }\\
\hline
\end{array}
\]

\newpage
\[
\begin{array}{|l|l|l|}
\hline
14& X^{n-1}Y+Y^{n-1}Z+Z^{n}+YZU^{n-2} 
& (0:0:0:1) \\
&& (0:1:0:\sqrt[n-2\;\;\;]{-1}) \\\cline{2-3}
&\multicolumn{2}{|l|}{  2n^2-5n+4 +4\delta_{11,12}+6\delta_{16,18}   }\\
\hline
15
& X^{n-1}Y+Y^{n-1}U+Z^{n}+XYU^{n-2} 
& (0:0:0:1) \\
&& (1:0:0:\sqrt[n-2\;\;\;]{-1}) \\\cline{2-3}
&\multicolumn{2}{|l|}{  n^2-2n+2+n\delta_{2,2}+2\delta_{3,3}-2\delta_{2,4}
+8\delta_{5,15}+8\delta_{\{6,8\},24}+8\delta_{12,30} }\\

&\multicolumn{2}{|l|}{
 +12\delta_{14,42}
+16\delta_{\{12,20\},60}   }\\
\hline

16
& X^{n-1}Y+Y^{n-1}U+Z^{n}+XZU^{n-2} 
& (0:0:0:1) \\\cline{2-3}
&\multicolumn{2}{|l|}{  n+4\delta_{3,5}   }\\
\hline
17& X^{n-1}Y+Y^{n-1}U+Z^{n}+YZU^{n-2} 
& (0:0:0:1) \\\cline{2-3}
&\multicolumn{2}{|l|}{  n^2-2   }\\
\hline
18& X^{n-1}Y+XY^{n-1}+Z^{n}+XYU^{n-2} 
& (0:0:0:1) \\
&& (1:0:0:\sqrt[n-2\;\;\;]{-1}) \\
&& (0:1:0:\sqrt[n-2\;\;\;]{-1}) \\\cline{2-3}
&\multicolumn{2}{|l|}{  2n^2-5n+4
+(3n-10)\delta_{2,2}
+4\delta_{2,4}
+8\delta_{\{6,8\},12}
+16\delta_{12,15}

   }\\
&\multicolumn{2}{|l|}{  

+16\delta_{12,20}
+16\delta_{\{14,18,20\},24}
+16\delta_{\{12,20\},30}
+32\delta_{\{32,42,50\},60}   }\\
\hline
19& X^{n-1}Y+XY^{n-1}+Z^{n}+XZU^{n-2} 
& (0:0:0:1) \\\cline{2-3}
&\multicolumn{2}{|l|}{  n^2-2   }\\
\hline
20& X^{n-1}Z+Y^{n-1}Z+Z^{n}+XYU^{n-2} 
& (0:0:0:1) \\
&& (1:0:\sqrt[n-2\;\;\;]{-1}:0) \\\cline{2-3}
&\multicolumn{2}{|l|}{  3n^2-10n+8
+2\delta_{1,2}+n\delta_{2,2}
+4\delta_{5,12}
+12\delta_{8,14}

   }\\
&\multicolumn{2}{|l|}{  

+24\delta_{17,24}
+8\delta_{\{7,11\},30}+72\delta_{16,30}
+16\delta_{17,40}
+60\delta_{22,42}
+16\delta_{17,48}

   }\\
&\multicolumn{2}{|l|}{  

+48\delta_{\{37,41\},60}+16\delta_{46,60}
+40\delta_{23,66}
+48\delta_{40,78}
+24\delta_{\{22,37\},84}
+32\delta_{41,120}   }\\
\hline
21&
 X^{n-1}Z+Y^{n-1}U+Z^{n}+XYU^{n-2} 
& (0:0:0:1) \\\cline{2-3}
&\multicolumn{2}{|l|}{  3n-2+8\delta_{4,15} +8\delta_{9,20}+10\delta_{5,22}   }\\
\hline
22&
 X^{n-1}Z+Y^{n-1}U+Z^{n}+XZU^{n-2} 
& (0:0:0:1) \\
&& (1:0:0:\sqrt[n-2\;\;\;]{-1}) \\\cline{2-3}
&\multicolumn{2}{|l|}{  n^2-2n+2
+(n-1)\delta_{1,2}
+6\delta_{4,18}    }\\
\hline
23&
 X^{n-1}Z+Y^{n-1}U+Z^{n}+YZU^{n-2} 
& (0:0:0:1) \\\cline{2-3}
&\multicolumn{2}{|l|}{  n^2-2+4\delta_{9,10}   }\\
\hline
24&
 X^{n-1}U+Y^{n-1}U+Z^{n}+XYU^{n-2} 
& (0:0:0:1) \\
&& (1:\sqrt[n-1\;\;\;]{-1}:0:0) \\\cline{2-3}
&\multicolumn{2}{|l|}{  n^2-n
+\delta_{1,2}+n\delta_{2,2}
+(2n-2)\delta_{1,3}

   }\\
&\multicolumn{2}{|l|}{  

-2\delta_{4,6}
+8\delta_{5,20}
+8\delta_{6,30}
+12\delta_{9,36}

   }\\
&\multicolumn{2}{|l|}{  

+16\delta_{15,40}
+12\delta_{10,42}
+16\delta_{36,60}
+24\delta_{21,84}
   }\\
\hline

25&
 X^{n-1}U+Y^{n-1}U+Z^{n}+XZU^{n-2} 
& (0:0:0:1) \\
&& (1:0:\sqrt[n-1\;\;\;]{-1}:0) \\\cline{2-3}
&\multicolumn{2}{|l|}{  n^2-2   }\\
\hline

26&
 X^{n}+Y^{n}+Z^{n-1}U+XYU^{n-2} 
& (0:0:0:1) \\\cline{2-3}
&\multicolumn{2}{|l|}{  3n-5
+n\delta_{1,2}+\delta_{2,2}
+4\delta_{4,12}

   }\\
&\multicolumn{2}{|l|}{  

+8\delta_{4,24}
+8\delta_{\{5,6,10\},30}
+16\delta_{6,40}
+12\delta_{9,42}

   }\\
&\multicolumn{2}{|l|}{  

+16\delta_{10,48}
+16\delta_{\{5,6,10\},60}
+24\delta_{\{8,9\},84}
+32\delta_{22,120}
   }\\
\hline

27&
 X^{n}+Y^{n}+Z^{n-1}U+XZU^{n-2} 
& (0:0:0:1) \\
\cline{2-3}
&\multicolumn{2}{|l|}{ 3n-2+4\delta_{4,10}    }\\
\hline
28&
 X^{n}+Y^{n}+XZ^{n-1}+XYU^{n-2} 
& (0:0:0:1) \\\cline{2-3}
&\multicolumn{2}{|l|}{  n^2-3+(n-2)\delta_{1,2}+\delta_{2,2}+4\delta_{10,12}+8\delta_{\{22,26\},30}    }\\
\hline
\end{array}
\]

\[
\begin{array}{|l|l|l|}
\hline
29&
 X^{n}+Y^{n}+XZ^{n-1}+XZU^{n-2} 
& (0:0:0:1) \\
&& (0:0:\sqrt[n-2\;\;\;]{-1}:1) \\\cline{2-3}
&\multicolumn{2}{|l|}{  2n^2-5n+4+(n-2)\delta_{2,2}   }\\
\hline
30&
 X^{n}+Y^{n}+XZ^{n-1}+YZU^{n-2} 
& (0:0:0:1) \\\cline{2-3}
&\multicolumn{2}{|l|}{  n^2-2+4\delta_{5,12}+8\delta_{\{7,11\},30}   }\\
\hline
31&
 X^{n-1}Y+Y^{n-1}Z+XZ^{n-2}U+XYU^{n-2} 
& (0:0:0:1) \\
&& (0:0:1:0) \\
&& (1:0:0:\sqrt[n-2\;\;\;]{-1}) \\\cline{2-3}
&\multicolumn{2}{|l|}{  3n^2-10n+7
+n\delta_{1,2}+3\delta_{2,2}
+8\delta_{\{14,20\},30}   }\\
\hline
32&
 X^{n-1}Y+Y^{n-1}Z+XZ^{n-2}U+XZU^{n-2} 
& (0:0:0:1) \\
&& (0:0:1:0) \\
&& (0:0:1:\sqrt[n-3\;\;\;]{-1}) \\\cline{2-3}
&\multicolumn{2}{|l|}{  2n^2-5n+4   }\\
\hline
33&
 X^{n-1}Y+Y^{n-1}Z+XZ^{n-2}U+YZU^{n-2} 
& (0:0:0:1) \\
&& (0:0:1:0) \\
&& (0:1:0:\sqrt[n-2\;\;\;]{-1}) \\\cline{2-3}
&\multicolumn{2}{|l|}{  3n^2-10n+10   }\\
\hline
34&
 X^{n-1}Y+Y^{n-1}Z+YZ^{n-2}U+XZU^{n-2} 
& (0:0:0:1) \\
&& (0:0:1:0) \\\cline{2-3}
&\multicolumn{2}{|l|}{  2n^2-5n+6   }\\
\hline
35&  X^{n-1}Y+Y^{n-1}Z+XYZ^{n-2}+XZU^{n-2} 
& (0:0:0:1) \\
&& (0:0:1:0) \\
&& (1:0:\sqrt[n-2\;\;\;]{-1}:0) \\\cline{2-3}
&\multicolumn{2}{|l|}{  3n^2-10n+10+4\delta_{8,8}+8\delta_{20,24}   }\\
\hline
36& X^{n-1}Y+XY^{n-1}+XZ^{n-2}U+YZU^{n-2} 
& (0:0:0:1) \\
&& (0:0:1:0) \\\cline{2-3}
&\multicolumn{2}{|l|}{  4n^2-15n+16+4\delta_{10,12}+8\delta_{\{22,26\},30}   }\\
\hline
37& X^{n-1}Z+Y^{n-1}Z+XZ^{n-2}U+XYU^{n-2} 
& (0:0:0:1) \\
&& (0:0:1:0) \\
&& (1:\sqrt[n-1\;\;\;]{-1}:0:0) \\\cline{2-3}
&\multicolumn{2}{|l|}{  2n^2-5n+4   }\\
\hline

38& X^{n-1}Z+Y^{n-1}U+XZ^{n-2}U+XYU^{n-2} 
& (0:0:0:1) \\
&& (0:0:1:0) \\\cline{2-3}
&\multicolumn{2}{|l|}{  2n^2-5n+4    }\\
\hline

39&
 X^{n-1}Z+Y^{n-1}U+XZ^{n-2}U+XZU^{n-2} 
& (0:0:0:1) \\
&& (0:0:1:0) \\
&& (1:0:0:\sqrt[n-2\;\;\;]{-1}) \\
&& (0:0:1:\sqrt[n-3\;\;\;]{-1}) \\\cline{2-3}
&\multicolumn{2}{|l|}{  4n^2-15n+16+4\delta_{10,12}+8\delta_{\{22,26\},30}    }\\
\hline
40
& X^{n-1}Z+Y^{n-1}U+XZ^{n-2}U+YZU^{n-2} 
& (0:0:0:1) \\
&& (0:0:1:0) \\\cline{2-3}
&\multicolumn{2}{|l|}{  3n^2-7n+\delta_{1,2}   }\\
\hline
41& X^{n-1}Z+Y^{n-1}U+YZ^{n-2}U+XYU^{n-2} 
& (0:0:0:1) \\
&& (0:0:1:0) \\
&& (0:\sqrt[n-2\;\;\;]{-1}:1:0) \\\cline{2-3}
&\multicolumn{2}{|l|}{  3n^2-10n+10   }\\
\hline
\end{array}
\]
\[
\begin{array}{|l|l|l|}
\hline
42&
 X^{n-1}Z+Y^{n-1}U+YZ^{n-2}U+XZU^{n-2} 
& (0:0:0:1) \\
&& (0:0:1:0) \\
&& (1:0:0:\sqrt[n-2\;\;\;]{-1}) \\
&& (0:1:\sqrt[n-2\;\;\;]{-1}:0) \\\cline{2-3}
&\multicolumn{2}{|l|}{  4n^2-15n+16
+(n-4)\delta_{2,2}
+2\delta_{2,4}

   }\\
&\multicolumn{2}{|l|}{  

+8\delta_{8,12}
+12\delta_{\{8,14\},18}
+16\delta_{12,20}
+16\delta_{14,24}
+32\delta_{32,60}
   }\\
\hline
43&
 X^{n}+Y^{n}+XZ^{n-2}U+XYU^{n-2} 
& (0:0:0:1) \\
&& (0:0:1:0) \\\cline{2-3}
&\multicolumn{2}{|l|}{   n^2-2
+6\delta_{5,9}
+6\delta_{8,18}
  }\\
\hline

44&
 X^{n}+Y^{n}+XZ^{n-2}U+XZU^{n-2} 
& (0:0:0:1) \\
&& (0:0:1:0) \\
&& (0:0:1:\sqrt[n-3\;\;\;]{-1}) \\\cline{2-3}
&\multicolumn{2}{|l|}{  2n^2-5n+4+
2\delta_{4,6}
+16\delta_{9,24}+8\delta_{21,24}

   }\\
&\multicolumn{2}{|l|}{  

+8\delta_{6,30}
+32\delta_{21,60}
+32\delta_{55,120}
    }\\
\hline

45&
 X^{n}+Y^{n}+XZ^{n-2}U+YZU^{n-2} 
& (0:0:0:1) \\
&& (0:0:1:0) \\
\cline{2-3}
&\multicolumn{2}{|l|}{  n^2+n-8
+n\delta_{1,2}
+4\delta_{2,2}
+2\delta_{1,4}
+4\delta_{4,12}

   }\\
&\multicolumn{2}{|l|}{  

+8\delta_{7,24}
+8\delta_{\{6,10,13\},30}
+12\delta_{10,42}
+16\delta_{13,60}
   }\\
\hline
46&
 X^{n}+Y^{n}+Z^{n-1}U+XU^{n-1} &\mbox{First calculated in \cite{ShiodaPic}}\\
&&\mbox{Case \uppercase\expandafter{\romannumeral7\relax} in \cite{Jun}} \\
\cline{2-3}
&\multicolumn{2}{|l|}{  n
+4\delta_{3,12}+6\delta_{4,18}
 }\\
\hline
47&

 X^{n}+Y^{n}+Z^{n-1}U+ZU^{n-1} &\mbox{Case \uppercase\expandafter{\romannumeral3\relax} in \cite{Jun}} \\
\cline{2-3}
&\multicolumn{2}{|l|}{   
n^2-n+1
+(4n-9)\delta_{2,2}
+(4n-4)\delta_{2,3}
+8\delta_{4,4}

   }\\
&\multicolumn{2}{|l|}{  

-4\delta_{2,6}
-8\delta_{4,8}
+8\delta_{10,10}
-4\delta_{8,12}+12\delta_{12,12}
+32\delta_{\{6,12\},24}+16\delta_{8,24}

   }\\
&\multicolumn{2}{|l|}{  

+24\delta_{14,28}
+24\delta_{\{12,20\},30}+16\delta_{30,30}
+24\delta_{18,36}
+64\delta_{20,40}

   }\\
&\multicolumn{2}{|l|}{  

+48\delta_{14,42}
+64\delta_{24,48}
+144\delta_{\{12,20\},60}+192\delta_{30,60}
+40\delta_{44,66}
+48\delta_{54,72}

   }\\
&\multicolumn{2}{|l|}{  

+216\delta_{42,84}+72\delta_{56,84}
+64\delta_{\{30,72\},120}+128\delta_{60,120}
+96\delta_{78,156}
+96\delta_{72,180}
   }\\
\hline
48&
 X^{n}+Y^{n}+XZ^{n-1}+XU^{n-1} 
& (0:0:\sqrt[n-1\;\;\;]{-1}:1) \\
\cline{2-3}
&\multicolumn{2}{|l|}{  
2n^2-5n
+4\delta_{1,2}+2n\delta_{2,2}
-2\delta_{4,6}
+8\delta_{9,12}
+12\delta_{10,18}
+8\delta_{9,24}

+24\delta_{16,30}
   }\\
&\multicolumn{2}{|l|}{

+48\delta_{21,30}+32\delta_{25,30}
+48\delta_{22,42}
+32\delta_{\{21,25\},60}
+48\delta_{55,90}
+32\delta_{25,120}
   }\\
\hline
49&
 X^{n}+Y^{n}+XZ^{n-1}+YU^{n-1} &\mbox{Case \uppercase\expandafter{\romannumeral4\relax} in \cite{Jun}} 
\\
\cline{2-3}
&\multicolumn{2}{|l|}{  2n^2-5n+4
+4\delta_{4,12}
+24\delta_{6,30}+8\delta_{10,30}
 }\\
\hline
50&
 X^{n-2}YZ+Y^{n-2}ZU+XZ^{n-2}U+XU^{n-1} 
& (1:0:0:0) \\
&& (0:1:0:0) \\
&& (0:0:1:0) \\
&& (0:0:\sqrt[n-2\;\;\;]{-1}:1) \\\cline{2-3}
&\multicolumn{2}{|l|}{  4n^2-15n+16
+4\delta_{10,12}
+8\delta_{22,30}
    }\\
\hline
51&
 X^{n-2}YZ+Y^{n-2}ZU+XZ^{n-2}U+YU^{n-1} 
& (1:0:0:0) \\
&& (0:1:0:0) \\
&& (0:0:1:0) \\\cline{2-3}
&\multicolumn{2}{|l|}{  3n^2-10n+10    }\\
\hline
52&
 X^{n-2}YZ+Y^{n-2}ZU+YZ^{n-2}U+XU^{n-1} 
& (1:0:0:0) \\
&& (0:1:0:0) \\
&& (0:0:1:0) \\
&& (0:\sqrt[n-3\;\;\;]{-1}:1:0) \\\cline{2-3}
&\multicolumn{2}{|l|}{  3n^2-10n+10 
+(n-3)\delta_{1,2}
+8\delta_{\{9,15\},15}
+8\delta_{\{15,19\},20}
   }\\
\hline
\end{array}
\]
\[
\begin{array}{|l|l|l|}
\hline
53&
 X^{n-2}YZ+Y^{n-2}ZU+XYZ^{n-2}+XU^{n-1} 
& (1:0:0:0) \\
&& (0:1:0:0) \\
&& (0:0:1:0) \\
&& (\sqrt[n-3\;\;\;]{-1}:0:1:0) \\\cline{2-3}
&\multicolumn{2}{|l|}{  
4n^2-15n+16
+4\delta_{12,12}
+6\delta_{17,18}
   }\\
\hline

54&
 X^{n-2}YZ+Y^{n-2}ZU+XZ^{n-2}U+U^{n} 
& (1:0:0:0) \\
&& (0:1:0:0) \\
&& (0:0:1:0) \\\cline{2-3}
&\multicolumn{2}{|l|}{  
2n^2-5n+4
+4\delta_{5,5}
   }\\
\hline

55&
 X^{n-2}YZ+Y^{n-2}ZU+YZ^{n-2}U+U^{n} 
& (1:0:0:0) \\
&& (0:1:0:0) \\
&& (0:0:1:0) \\
&& (0:\sqrt[n-3\;\;\;]{-1}:1:0) \\\cline{2-3}
&\multicolumn{2}{|l|}{  
3n^2-8n
+4\delta_{1,2}+n\delta_{2,2}
+4\delta_{11,12}
+8\delta_{23,24}

   }\\
&\multicolumn{2}{|l|}{  

+8\delta_{\{23,27,28\},30}
+16\delta_{37,40}
+12\delta_{36,42}
+16\delta_{41,48}

   }\\
&\multicolumn{2}{|l|}{  

+16\delta_{\{53,57,58\},60}
+24\delta_{\{78,79\},84}
+32\delta_{101,120}
   }\\
\hline
56&
 X^{n-2}YZ+Y^{n-2}ZU+XYZ^{n-2}+U^{n} 
& (1:0:0:0) \\
&& (0:1:0:0) \\
&& (0:0:1:0) \\
&& (\sqrt[n-3\;\;\;]{-1}:0:1:0) \\\cline{2-3}
&\multicolumn{2}{|l|}{  
2n^2-3n-2
+4\delta_{9,10}
   }\\
\hline

57&
 X^{n-2}YZ+XY^{n-2}Z+XYZ^{n-2}+U^{n} 
& (1:0:0:0) \\
&& (0:1:0:0) \\
&& (0:0:1:0) \\
&& (0:\sqrt[n-3\;\;\;]{-1}:1:0) \\
&& (\sqrt[n-3\;\;\;]{-1}:0:1:0) \\
&& (\sqrt[n-3\;\;\;]{-1}:1:0:0) \\\cline{2-3}
&\multicolumn{2}{|l|}{  
3n^2-9n-4
+11\delta_{1,2}+3n\delta_{2,2}
+(2n-6)\delta_{3,3}
-6\delta_{6,6}
+12\delta_{10, 14}
+36\delta_{15, 18}

   }\\
&\multicolumn{2}{|l|}{  

+24\delta_{15, 20}
+24\delta_{24, 28}
+72\delta_{18, 30}
+36\delta_{27, 36}
+48\delta_{35, 40}

   }\\
&\multicolumn{2}{|l|}{  

+60\delta_{24, 42}
+48\delta_{48, 60}
+48\delta_{42, 78}
+72\delta_{63, 84}
 
  }\\
\hline
58&
 X^{n-1}Y+Y^{n-1}Z+Z^{n-1}U+XYU^{n-2} 
& (0:0:0:1) \\
&& (1:0:0:\sqrt[n-2\;\;\;]{-1}) \\\cline{2-3}
&\multicolumn{2}{|l|}{  
2n^2-5n+4
+4\delta_{6,12}
+8\delta_{12,30}
   }\\
\hline

59&
 X^{n-1}Y+Y^{n-1}Z+Z^{n-1}U+XZU^{n-2} 
& (0:0:0:1) \\\cline{2-3}
&\multicolumn{2}{|l|}{  n^2-2n+4    }\\
\hline
60&
 X^{n-1}Y+Y^{n-1}Z+Z^{n-1}U+YZU^{n-2} 
& (0:0:0:1) \\
&& (0:1:0:\sqrt[n-2\;\;\;]{-1}) \\\cline{2-3}
&\multicolumn{2}{|l|}{   3n^2-10n+10   }\\
\hline
61&
 X^{n-1}Y+Y^{n-1}Z+XZ^{n-1}+XYU^{n-2} 
& (0:0:0:1) \\
&& (1:0:0:\sqrt[n-2\;\;\;]{-1}) \\\cline{2-3}
&\multicolumn{2}{|l|}{   
2n^2-5n+4
+2\delta_{6,6}
   }\\
\hline
62&
 X^{n-1}Y+Y^{n-1}Z+YZ^{n-1}+XZU^{n-2} 
& (0:0:0:1) \\
&& (1:0:\sqrt[n-2\;\;\;]{-1}:0) \\\cline{2-3}
&\multicolumn{2}{|l|}{  3n^2-10n+10 }\\
\hline
63&
 X^{n-1}Y+Y^{n-1}U+Z^{n-1}U+XYU^{n-2} 
& (0:0:0:1) \\
&& (1:0:0:\sqrt[n-2\;\;\;]{-1}) \\
&& (0:1:\sqrt[n-1\;\;\;]{-1}:0) \\\cline{2-3}
&\multicolumn{2}{|l|}{  3n^2-10n+10   }\\
\hline
\end{array}
\]

\[
\begin{array}{|l|l|l|}
\hline
64&
 X^{n-1}Y+Y^{n-1}U+Z^{n-1}U+XZU^{n-2} 
& (0:0:0:1) \\
&& (0:1:\sqrt[n-1\;\;\;]{-1}:0) \\\cline{2-3}
&\multicolumn{2}{|l|}{  
2n^2-5n+4
+4\delta_{4,12}
+6\delta_{5,18}
   }\\
\hline
65&
 X^{n-1}Y+Y^{n-1}U+Z^{n-1}U+YZU^{n-2} 
& (0:0:0:1) \\
&& (0:1:\sqrt[n-1\;\;\;]{-1}:0) \\\cline{2-3}
&\multicolumn{2}{|l|}{  
2n^2-5n+4
   }\\
\hline
66
& X^{n-1}Y+Y^{n-1}U+YZ^{n-1}+XZU^{n-2} 
& (0:0:0:1) \\
&& (1:0:\sqrt[n-1\;\;\;]{-1}:0) \\\cline{2-3}
&\multicolumn{2}{|l|}{  
n^2-2n+2
+(n-1)\delta_{1,2}
+8\delta_{\{4,10\},15}
+8\delta_{\{5,9\},20}
   }\\
\hline
67&
 X^{n-1}Y+XY^{n-1}+Z^{n-1}U+XYU^{n-2} 
& (0:0:0:1) \\
&& (1:0:0:\sqrt[n-2\;\;\;]{-1}) \\
&& (0:1:0:\sqrt[n-2\;\;\;]{-1}) \\\cline{2-3}
&\multicolumn{2}{|l|}{  
4n^2-15n+13
+n\delta_{1,2}+3\delta_{2,2}
+4\delta_{10,12}
+12\delta_{9,14}
+24\delta_{10,24}

   }\\
&\multicolumn{2}{|l|}{  

+72\delta_{17,30}+8\delta_{\{22,26\},30}
+16\delta_{26,40}
+60\delta_{23,42}
+16\delta_{34,48}
+16\delta_{17,60}

   }\\
&\multicolumn{2}{|l|}{  
+48\delta_{\{22,26\},60}

+40\delta_{46,66}
+48\delta_{41,78}
+24\delta_{\{50,65\},84}
+32\delta_{82,120}
   }\\
\hline

68&
 X^{n-1}Y+XY^{n-1}+Z^{n-1}U+XZU^{n-2} 
& (0:0:0:1) \\\cline{2-3}
&\multicolumn{2}{|l|}{   
2n^2-3n-2+8\delta_{14,30}
   }\\
\hline
69
&
 X^{n-1}Y+Y^{n-1}Z+Z^{n-1}U+U^{n} &\mbox{Case \uppercase\expandafter{\romannumeral9\relax} in \cite{Jun}} \\\cline{2-3}
&\multicolumn{2}{|l|}{  n
+4\delta_{3,8}
+8\delta_{7,24}
    }\\
\hline

70
&
 X^{n-1}Y+Y^{n-1}Z+XZ^{n-1}+U^{n} &\mbox{First calculated in \cite{ShiodaPic}}\\
&&\mbox{Case \uppercase\expandafter{\romannumeral8\relax} in \cite{Jun}} \\\cline{2-3}
&\multicolumn{2}{|l|}{  
1
+\delta_{2,2}
+2n\delta_{3,3}
+6\delta_{\{4,6\},14}

   }\\
&\multicolumn{2}{|l|}{  

+12\delta_{\{4,20\},28}
+12\delta_{\{6,18\},42}
+24\delta_{\{18,24\},78}
   }\\
\hline
71&
 X^{n-1}Y+Y^{n-1}Z+YZ^{n-1}+U^{n} 
& (1:0:\sqrt[n-1\;\;\;]{-1}:0) \\\cline{2-3}
&\multicolumn{2}{|l|}{  
n^2-2n+2
+(n-1)\delta_{1,2}
+4\delta_{4,12}
+8\delta_{\{6,10\},30}
   }\\
\hline
72&
 X^{n-1}Y+Y^{n-1}U+Z^{n-1}U+U^{n} 
& (0:1:\sqrt[n-1\;\;\;]{-1}:0) \\\cline{2-3}
&\multicolumn{2}{|l|}{  
2n^2-5n+4
+(n-3)\delta_{1,2}
+2\delta_{1,4}
+8\delta_{7,12}

   }\\
&\multicolumn{2}{|l|}{  

+12\delta_{\{7,13\},18}
+16\delta_{11,20}
+16\delta_{13,24}
+32\delta_{31,60}
   }\\
\hline
73&
 X^{n-1}Y+Y^{n-1}U+YZ^{n-1}+U^{n} 
& (1:0:\sqrt[n-1\;\;\;]{-1}:0) \\\cline{2-3}
&\multicolumn{2}{|l|}{  2n^2-5n+4   }\\
\hline
74&
 X^{n-1}Y+XY^{n-1}+Z^{n-1}U+U^{n} 
&\mbox{Case \uppercase\expandafter{\romannumeral5\relax} in \cite{Jun}} \\
\cline{2-3}
&\multicolumn{2}{|l|}{  n^2-2n
+2\delta_{1,2}
+2\delta_{3,3}
+2\delta_{\{2,4\},6}

   }\\
&\multicolumn{2}{|l|}{  

+8\delta_{4,24}
+8\delta_{5,30}
+32\delta_{6,120}   }\\
\hline

75&
 X^{n}+Y^{n}+Z^{n}+XYU^{n-2} 
& (0:0:0:1) \\
\cline{2-3}
&\multicolumn{2}{|l|}{  
3n-2
+(3n-8)\delta_{2,2}
+4\delta_{4,4}
+8\delta_{\{6,8\},12}
+16\delta_{5,15}

   }\\
&\multicolumn{2}{|l|}{  

+16\delta_{10,20}
+16\delta_{\{6,8,12\},24}
+16\delta_{\{12,20\},30}
+32\delta_{\{12,20,32\},60}
   }\\
\hline

76&
 X^{n}+Y^{n}+Z^{n}+XU^{n-1} 
&\mbox{Case \uppercase\expandafter{\romannumeral2\relax} in \cite{Jun}} 
\\
\cline{2-3}
&\multicolumn{2}{|l|}{  
n^2-2n
+2n\delta_{1,2}+2\delta_{2,2}
-2\delta_{3,6}
+8\delta_{4,12}
+12\delta_{9,18}

   }\\
&\multicolumn{2}{|l|}{  

+8\delta_{16,24}
+32\delta_{6,30}+48\delta_{10,30}+24\delta_{15,30}
+48\delta_{21,42}
+32\delta_{\{36,40\},60}

   }\\
&\multicolumn{2}{|l|}{  

+48\delta_{36,90}
+32\delta_{96,120}   }\\
\hline

77&
 X^{n-2}YZ+Y^{n-2}ZU+XZ^{n-2}U+XYU^{n-2} 
& (1:0:0:0) \\
&& (0:1:0:0) \\
&& (0:0:1:0) \\
&& (0:0:0:1) \\\cline{2-3}
&\multicolumn{2}{|l|}{  
5n^2-21n+24
+\delta_{1,2}
+8\delta_{\{16,20\},20}
+8\delta_{\{20,26\},30}
+16\delta_{\{20,26,50,56\},60}
+32\delta_{\{86,110\},120}
   }\\
\hline
78&
 X^{n-1}Y+Y^{n-1}Z+Z^{n-1}U+XU^{n-1} &\mbox{First calculated in \cite{ShiodaPic}}\\
&&\mbox{Case \uppercase\expandafter{\romannumeral10\relax} in \cite{Jun}} \\
\cline{2-3}
&\multicolumn{2}{|l|}{   
n^2-n
+\delta_{1,2}
+8\delta_{\{4,8\},20}
+8\delta_{\{8,14\},30}
+16\delta_{\{8,14,38,44\},60}
+32\delta_{\{14,38\},120}
  }\\
\hline
79&
 X^{n-1}Y+Y^{n-1}Z+Z^{n-1}U+YU^{n-1} 
& (\sqrt[n-1\;\;\;]{-1}:0:0:1) \\\cline{2-3}
&\multicolumn{2}{|l|}{   n^2-2n+2
+2\delta_{3,6}  }\\
\hline
80&
 X^{n-1}Y+Y^{n-1}Z+Z^{n-1}U+ZU^{n-1} 
& (0:\sqrt[n-1\;\;\;]{-1}:0:1) \\\cline{2-3}
&\multicolumn{2}{|l|}{   2n^2-5n+4  }\\
\hline
\end{array}
\]
\[
\begin{array}{|l|l|l|}
\hline
81&
 X^{n-1}Y+Y^{n-1}Z+YZ^{n-1}+ZU^{n-1} 
& (1:0:\sqrt[n-1\;\;\;]{-1}:0) \\
&& (0:\sqrt[n-1\;\;\;]{-1}:0:1) \\\cline{2-3}
&\multicolumn{2}{|l|}{  
4n^2-15n+16
+4\delta_{10,12}
+8\delta_{22,30}
+24\delta_{26,30}   }\\
\hline
82&
 X^{n-1}Y+XY^{n-1}+Z^{n-1}U+ZU^{n-1} &\mbox{Case \uppercase\expandafter{\romannumeral6\relax} in \cite{Jun}} 
\\
\cline{2-3}
&\multicolumn{2}{|l|}{  
3n^2-9n+7
+\delta_{2,2}
+8\delta_{\{6,8\},12}
+16\delta_{12,20}
+16\delta_{\{6,8,14\},24}
+32\delta_{12,30}

   }\\
&\multicolumn{2}{|l|}{  
+16\delta_{20,30}
+48\delta_{30,42}
+96\delta_{12,60}+32\delta_{\{20,32,42,50\},60}
+48\delta_{14,84}
+64\delta_{72,120}
   }\\
\hline
83&

 X^{n}+Y^{n}+Z^{n}+U^{n} & \mbox{The Fermat surface described in  \cite{Aoki} and \cite{ShiodaPic}}\\
&& \mbox{Case \uppercase\expandafter{\romannumeral1\relax} in \cite{Jun}} \\
\cline{2-3}
&\multicolumn{2}{|l|}{  
3n^2-9n+7
+(24n-47)\delta_{2,2}
+(8n-24)\delta_{3,3}
-48\delta_{4, 4}
-96\delta_{6, 6}

   }\\
&\multicolumn{2}{|l|}{  

-48\delta_{8, 8}
-48\delta_{10, 10}
+144\delta_{12, 12}
+48\delta_{14, 14}
+192\delta_{15, 15}
 
   }\\
&\multicolumn{2}{|l|}{  

                                                    +432\delta_{18, 18}
                                                     +624\delta_{20, 20}
                                                     +288\delta_{21, 21}
                                                     +912\delta_{24, 24}
                                                     +240\delta_{28, 28}

   }\\
&\multicolumn{2}{|l|}{  

                                                     +2256\delta_{30, 30}
                                                     +432\delta_{36, 36}
                                                     +384\delta_{40, 40}
                                                     +3984\delta_{42, 42}
                                                     +384\delta_{48, 48}

   }\\
&\multicolumn{2}{|l|}{  

                                                     +4896\delta_{60, 60}
                                                     +720\delta_{66, 66}
                                                     +288\delta_{72, 72}
                                                     +768\delta_{78, 78}
                                                     +1584\delta_{84, 84}
 
   }\\
&\multicolumn{2}{|l|}{  

                                                    +576\delta_{90, 90}
                                                    +1728\delta_{120, 120}
                                                    +576\delta_{156, 156}
                                                    +576\delta_{180, 180}   }\\
\hline
\end{array}
\]

\bibliographystyle{plain}
\bibliography{ADEsing}

\end{document}